\theoremstyle{plain}
\newtheorem{thm}[subsection]{Theorem}
\newtheorem{prop}[subsection]{Proposition}
\newtheorem{cor}[subsection]{Corollary}	
\theoremstyle{definition}
\newtheorem{defn}[subsection]{Definition}
\newtheorem{exm}[subsection]{Example}
\newtheorem{rem}[subsection]{Remark}
\begin{document}


\title[Local automorphisms  of $p$-filiform Leibniz  algebras]{Local automorphisms  of $p$-filiform Leibniz  algebras}

\author[ Yusupov B.B. ]{Bakhtiyor Yusupov$^{1,2}$}
\address{$^1$ V.I.Romanovskiy Institute of Mathematics\\
  Uzbekistan Academy of Sciences, 9 \\ Univesity street, 100174  \\
  Tashkent,   Uzbekistan}
\address{$^2$ Department of Physics and Mathematics, Urgench State University, H. Alimdjan street, 14, Urgench
220100, Uzbekistan}
\email{\textcolor[rgb]{0.00,0.00,0.84}{baxtiyor\_yusupov\_93@mail.ru}}
\maketitle

\date{}
\maketitle

\begin{abstract}
This paper is devoted to study local automorphisms  of $p$-filiform Leibniz algebras. We prove that $p$-filiform Leibniz algebras as a rule admit local automorphisms which are not automorphisms.\\

{\it Keywords:} Leibniz algebra, $p$-filiform Leibniz algebras, automorphism, local automorphism.
\\

{\it AMS Subject Classification:} 17A36,  17B20, 17B40.

\end{abstract}

\maketitle \thispagestyle{empty}


\section{Introduction}\label{sec:intro}

In recent years non-associative analogues of classical constructions become of
interest in connection with their applications in many branches of mathematics and
physics. The notions of local and $2$-local derivations (automorphisms) are also  popular for some non-associative algebras such as the Lie and Leibniz algebras.

R.Kadison \cite{Kad} introduced the concept of a local derivation and proved
that each continuous local derivation from a von Neumann algebra into its dual Banach
bemodule is a derivation. B. Jonson \cite{Jon} extended the above result by proving that every
local derivation from a C*-algebra into its Banach bimodule is a derivation. In particular, Johnson
gave an automatic continuity result by proving that local derivations of a C*-algebra $A$ into a
Banach $A$-bimodule $X$ are continuous even if not assumed a priori to be so
(cf. \cite[Theorem 7.5]{Jon}). Based on these results, many authors have studied
local derivations on operator algebras.

A similar notion, which characterizes non-linear generalizations of automorphisms, was introduced
by  P.\v{S}emrl in \cite{S} as $2$-local automorphisms.
He described such maps on the algebra $B(H)$ of all bounded linear operators on an infinite dimensional separable Hilbert space $H$.
After P.\v{S}emrl's work, numerous new results related to the description of local and $2$-local derivation of some varieties have been appeared
(see, for example, \cite{AKO,WCN1,ChWD}).

The first results concerning to local and 2-local derivations and automorphisms on finite-dimensional Lie algebras over algebraically closed
field of zero characteristic were obtained in \cite{AK,AKR}. Namely, in \cite{AKR} it is proved
that every 2-local derivation on a semi-simple Lie algebra $\mathcal{L}$ is a derivation and that
each finite-dimensional nilpotent Lie algebra with dimension larger than two admits
2-local derivation which is not a derivation. In \cite{AK} the authors have proved that every local
derivation on semi-simple Lie algebras is a derivation and gave examples of nilpotent
finite-dimensional Lie algebras with local derivations which are not derivations.
 Sh.Ayupov, K.Kudaybergenov, B.Omirov proved similar results concerning local and 2-local derivations and automorphisms on simple Leibniz algebras  in their recent paper \cite{AKO}. Local automorphisms of certain finite-dimensional simple Lie and Leibniz algebras are investigated in \cite{AKConfe}. Concerning local automorphism, T.Becker, J.Escobar, C.Salas and R.Turdibaev in \cite{BEST} established that the set of local automorphisms $LAut(sl_2)$  coincides with the group $Aut^{\pm}(sl_2)$ of all automorphisms and anti-automorphisms. Later in \cite{Costantini}  M.Costantini proved that a linear map on a simple Lie algebra is a local automorphism if
and only if it is either an automorphism or an anti-automorphism. Similar results concerning local and 2-local derivations and automorphisms on Lie superalgebras were obtained in  \cite{ChWD,WCN1} and \cite{WCN2}.
 In \cite{AyuKhuYus} local derivations of solvable Leibniz algebras are investigated and it is shown that in the class of solvable Leibniz algebras there exist algebras which admit local derivations which are not derivation and also algebras for which every local derivation
is a derivation. Moreover, it is proved that every local derivation on a finite-dimensional solvable Leibniz algebras with model
nilradical and maximal dimension of complementary space is a derivation. The results of the paper \cite{AyuKudYus1} show that p-filiform Leibniz algebras as a rule admit local derivations which are not derivations. The authors proved similar results concerning local automorphism on the solvable Leibniz algebras with null-filiform and naturally graded non-Lie filiform nilradicals, whose dimension of complementary space is maximal is an automorphism\cite{AKU}. J.Adashev and B.Yusupov proved similar results concerning local derivations of naturally graded quasi-filiform Leibniz algebras in their recent paper \cite{AdaYus}. J.Adashev and B.Yusupov proved proved that direct sum null-filiform nilpotent Leibniz algebras as a rule admit local derivations which are not derivations \cite{AdaYus1}. J.Adashev and B.Yusupov proved that quasi-filiform Leibniz algebras of type I, as a rule, admit local automorphisms which are not automorphisms \cite{AdaYus2}.
The first example of a simple (ternary) algebra with nontrivial local derivations is constructed by B.Ferreira, I.Kaygorodov and K.Kudaybergenov in \cite{FKK}. After that, in \cite{AEK1} Sh.Ayupov, A.Elduque and K.Kudaybergenov constructed an example for a simple (binary) algebra with non-trivial local derivations.

In the paper \cite{KUY}, I.A.Karimjanov, S.M.Umrzaqov, and B.B.Yusupov describe automorphisms, local and 2-local automorphisms of solvable Leibniz algebras with a model or abelian null-radicals. They show that any local automorphisms on solvable Leibniz algebras with a model nilradical, the dimension of the complementary space of which is maximal, is an automorphism.
But solvable Leibniz algebras with an abelian nilradical with a  $1$-dimensional complementary space admit local automorphisms which are not automorphisms.

In the present paper we study automorphisms and local automorphisms $p$-filiform Leibniz algebras. In Section 3 we describe the automorphisms $p$-filiform Leibniz algebras. In Section 4 we describe the local automorphisms $p$-filiform Leibniz algebras. We show that in section 4 we describe $p$-filiform Leibniz algebras as a rule admit local automorphisms which are not automorphisms.

\section{Preliminaries}

In this section we give some necessary definitions and preliminary results.

\begin{defn}
A  vector space with bilinear bracket $(\mathcal{L},[\cdot,\cdot])$
is called a Leibniz algebra if for any $x,y,z\in \mathcal{L}$ the so-called
Leibniz identity
$$\big[x,[y,z]\big]=\big[[x,y],z\big]-\big[[x,z],y\big],$$
holds.
\end{defn}

Let $\mathcal{L}$ be a Leibniz algebra. For a Leibniz algebra $\mathcal{L}$ consider the following central lower and
derived sequences:
$$
\mathcal{L}^1=\mathcal{L},\quad \mathcal{L}^{k+1}=[\mathcal{L}^k,\mathcal{L}^1], \quad k \geq 1,
$$
$$\mathcal{L}^{[1]} = \mathcal{L}, \quad \mathcal{L}^{[s+1]} = [\mathcal{L}^{[s]}, \mathcal{L}^{[s]}], \quad s \geq 1.$$

\begin{defn} A Leibniz algebra $\mathcal{L}$ is called
nilpotent (respectively, solvable), if there exists  $p\in\mathbb N$ $(q\in
\mathbb N)$ such that $\mathcal{L}^p=0$ (respectively, $\mathcal{L}^{[q]}=0$).The minimal number $p$ (respectively, $q$) with such
property is said to be the index of nilpotency (respectively, of solvability) of the algebra $\mathcal{L}$.
\end{defn}
Now let us define a naturally graduation for a nilpotent Leibniz algebra.

\begin{defn} Given a nilpotent Leibniz algebra $\mathcal{L}$, put
$\mathcal{L}_i=\mathcal{L}^i/\mathcal{L}^{i+1}, \ 1 \leq i\leq n-1$, and $gr(\mathcal{L}) = \mathcal{L}_1 \oplus
\mathcal{L}_2\oplus\dots \oplus \mathcal{L}_{n-1}$. Then $[\mathcal{L}_i,\mathcal{L}_j]\subseteq \mathcal{L}_{i+j}$ and we
obtain the graded algebra $gr(\mathcal{L})$. If $gr(\mathcal{L})$ and $L$ are isomorphic, then
we say that an algebra $\mathcal{L}$ is naturally graded.
\end{defn}

Now we define the notion of characteristic sequence, which is one of the important invariants.
For a finite-dimensional nilpotent Leibniz algebra  $N$ and for the matrix of the linear operator $R_x$ denote by $C(x)$ the
descending sequence of its Jordan blocks' dimensions. Consider the
lexicographical order on the set $C(N)=\{C(x)  \ | \ x \in N\}$.

\begin{defn} The sequence
$$\left(\max\limits_{x\in N\setminus N^2} C(x) \right) $$ is said to be the
characteristic sequence of the nilpotent Leibniz algebra $N.$
\end{defn}

\begin{defn} A Leibniz algebra $\mathcal{L}$ is called $p$-filiform, if the characteristic sequence is $C(\mathcal{L})=(n-p,\underbrace {1,\dots,1}_{p}).$
\end{defn}

Now we give the definitions of automorphisms and local automorphisms.

\begin{defn}\label{aut}
A linear bijective map $\varphi: \mathcal{L} \rightarrow \mathcal{L}$ is called an automorphism, if it satisfies $\varphi([x,y])=[\varphi(x),\varphi(y)]$  for all $x,y\in\mathcal{L}.$
\end{defn}

\begin{defn}
Let $\mathcal{L}$ be an algebra. A linear map $\Delta : \mathcal{L} \to \mathcal{L}$ is called a local automorphism, if for
any element $x \in\mathcal{L} $ there exists an automorphism $\varphi_x : \mathcal{L} \to \mathcal{L}$ such that $\Delta(x) = \varphi_x(x)$.
\end{defn}

It was proved in \cite[Theorem 2.9]{ALO} that any naturally graded indecomposable non-Lie $p$-filiform Leibniz algebra, is isomorphic to one of the following pairwise non-isomorphic algebras $(n-p\geq4)$:

if $p=2k$, then
\begin{eqnarray*}
& \mu_1  : & \left\{\begin{array}{ll}
[e_i,e_1]=e_{i+1},    & 1\leq i\leq n-2k-1,\\[1mm]
[e_1, f_j] =f_{k+j},  & 1\leq j\leq k,\\[1mm]
 \end{array}\right.
\\
&\mu_2 : &\left\{\begin{array}{ll}
[e_i,e_1]=e_{i+1},       & 1\leq i\leq n-2k-1,\\[1mm]
[e_1,f_1]=e_{2}+f_{k+1}, &\\[1mm]
[e_i,f_1]=e_{i+1},       & 2\leq i\leq n-2k-1,\\[1mm]
[e_1, f_j] =f_{k+j},     & 2\leq j\leq k,\\[1mm]
 \end{array}\right.
\end{eqnarray*}

if $p=2k+1$, then
\begin{eqnarray*}
&\mu_3 : & \left\{\begin{array}{ll}
[e_1,e_1]=e_{3},   & \\[1mm]
[e_i,e_1]=e_{i+1}, & 2\leq i\leq n-2k-1,\\[1mm]
[e_1,f_j]=f_{k+j}, & 1\leq j\leq k,\\[1mm]
[e_2,f_j]=f_{k+j}, & 1\leq j\leq k,\\[1mm]
 \end{array}\right.
\end{eqnarray*}
where  $\{e_1,e_2,\dots,e_{n-p},f_1,f_2,\dots,f_{p}\}$ is the basis of the algebra and the omitted products are equal to zero.

\section{Automorphisms of naturally graded non-Lie $p$-filiform Leibniz  algebras}

In order to start the description we need to know the automorphisms of naturally graded non-Lie $p$-filiform Leibniz algebras.

\begin{prop} \label{prop1} Any automorphisms of the algebra $\mu_1$ has the following matrix form:
\begin{eqnarray*}
& \Phi & =\begin{pmatrix}
\varphi_{1,1} & \varphi_{1,2}\\
\varphi_{2,1} & \varphi_{2,2}
\end{pmatrix},
\end{eqnarray*}
where
$$
\varphi_{1,1} = \sum_{i=1}^{n-2k}a_{1}^ie_{i,i}+\sum_{i=1}^{n-2k-1}a_1^{i-1}\sum_{j=i+1}^{n-2k}a_{j-i+1}e_{j,i}, $$
$$ \varphi_{2,1}=\sum_{i=1}^{2k}b_{i}e_{i,1}+a_1\sum_{i=1}^{k}b_{i}e_{k+i,2},\quad
\varphi_{1,2} = \sum_{i=1}^{k}c_ie_{n-2k,i},
$$

$$ \varphi_{2,2}= \begin{pmatrix}
\varphi_{2,2}^{(1)} & 0\\
\varphi_{2,2}^{(2)} & a_1\varphi_{2,2}^{(1)}
\end{pmatrix},$$

$$\varphi_{1,1}\in M_{n-2k,n-2k}, \ \varphi_{2,1}\in M_{2k,n-2k}, \
\varphi_{1,2}\in M_{n-2k,2k}, \ \varphi_{2,2}^{(1)}, \varphi_{2,2}^{(2)}.$$

Let $\{e_{i,j}: 1 \leq i,j\leq n\}$ -- be the system of matrix units, i.e., the $(n\times n)$-matrix $e_{i,j}$ is such that the
$(i,j)$th component is equal to 1 and all other components are zeros.
\end{prop}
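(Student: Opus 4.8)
The plan is to exploit that $\mu_1$ is generated as a Leibniz algebra by the degree-one elements $e_1,f_1,\dots,f_k$, together with the defining recursions $[e_i,e_1]=e_{i+1}$ and $[e_1,f_j]=f_{k+j}$. First I would record the lower central series: a short computation gives $\mathcal{L}^2=\langle e_2,\dots,e_{n-2k},f_{k+1},\dots,f_{2k}\rangle$ and $\mathcal{L}^i=\langle e_i,\dots,e_{n-2k}\rangle$ for $i\geq 3$. Since every automorphism $\Phi$ satisfies $\Phi(\mathcal{L}^i)=\mathcal{L}^i$, this already forces $\Phi(e_m)\in\mathcal{L}^{\max(m,2)}$ and pins down the block-triangular shape; in particular $e_{n-2k}$ spans the last nonzero term $\mathcal{L}^{n-2k}$, so it can only be scaled.

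Next I would write the unknown images of the generators as $\Phi(e_1)=\sum_i A_i e_i+\sum_l B_l f_l$ and $\Phi(f_j)=\sum_i C_i^{(j)}e_i+\sum_l D_l^{(j)}f_l$, and then use the recursions to obtain everything else. Applying $\Phi(e_m)=[\Phi(e_{m-1}),\Phi(e_1)]$ and inducting, only the products $[e_i,e_1]=e_{i+1}$ contribute to the $e$-part, giving $\Phi(e_m)=A_1^{m-1}\sum_{i\geq m}A_{i-m+1}e_i$ for every $m$; this is exactly the lower-triangular block $\varphi_{1,1}$ after setting $a_i:=A_i$. For $m=1,2$ there are the additional $f$-parts $\sum_{l=1}^{2k} B_l f_l$ and $A_1\sum_{l\leq k}B_l f_{k+l}$, which form the only two nonzero columns of $\varphi_{2,1}$ with $b_i:=B_i$. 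Since $\Phi(e_2)=[\Phi(e_1),\Phi(e_1)]$ vanishes identically when $A_1=0$, bijectivity forces $a_1=A_1\neq 0$.

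The shape of the $f$-images is then forced by preservation of the vanishing brackets. From $[e_m,f_j]=0$ for $m\geq 2$ one gets $[\Phi(e_2),\Phi(f_j)]=A_1^2C_1^{(j)}e_3+\dots=0$, hence $C_1^{(j)}=0$; feeding this into $[f_j,e_1]=0$ gives $[\Phi(f_j),\Phi(e_1)]=A_1\sum_{i\geq 2}C_i^{(j)}e_{i+1}=0$, so $C_i^{(j)}=0$ for all $i\leq n-2k-1$ and only $c_j:=C_{n-2k}^{(j)}$ survives, which is precisely the single nonzero row of $\varphi_{1,2}$ (and only for $j\leq k$). With $C_1^{(j)}=0$ the relation $\Phi(f_{k+j})=[\Phi(e_1),\Phi(f_j)]=A_1\sum_{l\leq k}D_l^{(j)}f_{k+l}$ has no $e$-part and involves only $D_1^{(j)},\dots,D_k^{(j)}$; collecting $\varphi_{2,2}^{(1)}=(D_l^{(j)})_{l,j\leq k}$ and $\varphi_{2,2}^{(2)}=(D_{k+l}^{(j)})$ shows the upper-right block of $\varphi_{2,2}$ is zero and its lower-right block is $a_1\varphi_{2,2}^{(1)}$, matching the claimed form.

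The main obstacle is the exhaustive bookkeeping of \emph{all} the vanishing products --- $[e_1,e_m]$, $[e_m,e_{m'}]$ with $m'\neq 1$, $[e_m,f_j]$, $[e_m,f_{k+j}]$, $[f_l,\cdot]$ and $[e_1,f_{k+j}]$ --- to confirm, on the one hand, that exactly the conditions $C_1^{(j)}=0$ and $C_i^{(j)}=0$ $(i\leq n-2k-1)$ are produced and, on the other, that every remaining bracket holds \emph{automatically}, so that no further relations among $a_i,b_i,c_j,D_l^{(j)}$ appear. The genuinely delicate point is tracking which $f$-components are inert: the vectors $f_{k+l}$ never enter a nonzero product, which is what leaves $b_{k+1},\dots,b_{2k}$ free and confines them to the first column of $\varphi_{2,1}$, and what leaves $\varphi_{2,2}^{(2)}$ entirely unconstrained. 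Finally, since $\Phi$ respects the filtration, its invertibility is read off from the induced maps on the graded pieces, which are multiplication by the powers $a_1,a_1^2,\dots$ on the $e$-part and by $\varphi_{2,2}^{(1)}$ on the degree-one $f$-part; hence $\Phi$ is an automorphism exactly when $a_1\neq 0$ and $\varphi_{2,2}^{(1)}$ is invertible, completing the description.
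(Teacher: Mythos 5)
Your proposal is correct and follows essentially the same route as the paper: write generic images of the generators $e_1,f_1,\dots,f_k$, then use the recursions $[e_i,e_1]=e_{i+1}$, $[e_1,f_j]=f_{k+j}$ together with the vanishing brackets $[f_j,e_1]=[e_i,f_j]=0$ and induction to pin down all matrix entries. Your extra observations --- the a priori block-triangular shape from invariance of the lower central series, and the explicit bijectivity criterion ($a_1\neq 0$ and $\varphi_{2,2}^{(1)}$ invertible) together with the sufficiency bookkeeping --- go beyond the paper's proof, which stops once the necessary form of the generator images is derived.
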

\begin{proof} Let $\{e_1,f_1,f_2,\dots,f_k\}$ be a generator basis elements of the algebra $\mu_1$.

We put
\[ \varphi(e_1)=\sum\limits_{i=1}^{n-2k}a_ie_i+\sum\limits_{i=1}^{2k}b_if_i, \qquad \varphi(f_i)=\sum\limits_{j=1}^{n-2k}c_{j,i}e_j+\sum\limits_{j=1}^{2k}d_{j,i}f_j, \quad 1\leq i\leq k.\]

From the automorphisms property \eqref{aut} we have
\begin{equation*}\begin{split}
\varphi(e_2)&=\varphi([e_1,e_1])=[\varphi(e_1),\varphi(e_1)]=
\left[\sum\limits_{i=1}^{n-2k}a_ie_i+\sum\limits_{i=1}^{2k}b_if_i,\sum\limits_{i=1}^{n-2k}a_ie_i+\sum\limits_{i=1}^{2k}b_if_i\right]=\\
            &=a_1\sum\limits_{i=2}^{n-2k}a_{i-1}e_{i}+a_1\sum\limits_{i=1}^{k}b_if_{k+i}.
\end{split}
\end{equation*}

Buy applying the induction and the automorphisms property \eqref{aut} we derive
\[ \varphi(e_i)=a_1^{i-1}\sum\limits_{t=i}^{n-2k}a_{t-i+1}e_t, \quad 3 \leq i \leq
n-2k.\]

Consider
\begin{equation*}\begin{split}
0&=\varphi([f_i,e_1])=[\varphi(f_i),\varphi(e_1)]=
\left[\sum\limits_{j=1}^{n-2k}c_{j,i}e_j+\sum\limits_{j=1}^{2k}d_{j,i}f_j,\sum\limits_{j=1}^{n-2k}a_je_j+\sum\limits_{j=1}^{2k}b_jf_j\right]=\\
&=a_1\sum\limits_{j=1}^{n-2k-1}c_{j,i}e_{j+1}+c_{1,i}\sum\limits_{j=1}^kb_jf_j, \quad 1 \leq i \leq k.
\end{split}
\end{equation*}
Consequently,
\[a_1\neq0,\ c_{j,i}=0,\quad  1 \leq i \leq k, \quad 1 \leq j \leq n-2k-1.\]

Similarly, from $\varphi(f_{k+i})=\varphi([e_1, f_i]), \ 1 \leq i \leq k$, we deduce
\[ \varphi(f_{k+i})=a_1\sum\limits_{j=1}^{k}d_{j,i}f_{k+j}, \qquad 1\leq i\leq k.\]

\end{proof}

\begin{prop} \label{prop2} Any automorphisms of the algebra $\mu_2$ has the following matrix form:

\begin{eqnarray*}
& \Phi & =\begin{pmatrix}
\varphi_{1,1} & \varphi_{1,2}\\
\varphi_{2,1} & \varphi_{2,2}
\end{pmatrix},
\end{eqnarray*}
where
$$ \varphi_{1,1}=  \sum_{i=1}^{n-2k}(a_{1}+b_1)^{i-1}a_1e_{i,i}+\sum_{i=1}^{n-2k-1}\sum_{j=i+1}^{n-2k}(a_1+b_1)^{i-1}a_{j-i+1}e_{j,i}, $$
$$\varphi_{2,1} =\sum_{i=1}^{2k}b_{i}e_{i,1}+a_1\sum_{i=1}^{k}b_{i}e_{k+i,2},\quad D_{1,2}= \sum_{i=1}^{k}c_ie_{n-2k,i},$$
$$ \varphi_{2,2}=  \begin{pmatrix}
\varphi_{2,2}^{(1)} & 0\\
\varphi_{2,2}^{(2)} & \varphi_{2,2}^{(3)}
\end{pmatrix},$$
$$ \varphi_{2,2}^{(1)} =  \sum_{i=1}^{k}\sum_{j=2}^{k}d_{j,i}e_{j,i}+(a_1+b_{1})e_{1,1},\quad
\varphi_{2,2}^{(3)} =  a_1\varphi_{2,2}^{(1)}-a_1\sum_{j=1}^{k}b_{j}e_{j,1},
$$
$$\varphi_{1,1}\in M_{n-2k,n-2k},\  \varphi_{2,1}\in M_{2k,n-2k}, \ \varphi_{1,2}\in M_{n-2k,2k},\
\varphi_{2,2}^{(1)}, \varphi_{2,2}^{(2)}, \varphi_{2,2}^{(3)}, \mathbb{E}\in M_{k,k}.$$
Let $\{e_{i,j}: 1 \leq i,j\leq n\}$ -- be the system of matrix units, i.e., the $(n\times n)$-matrix $e_{i,j}$ is such that the
$(i,j)$th component is equal to 1 and all other components are zeros.
\end{prop}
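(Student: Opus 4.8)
The plan is to determine the automorphisms of $\mu_2$ by computing the images of the generators and propagating through the defining relations, exactly as in the proof of Proposition \ref{prop1}, but keeping careful track of the extra terms arising from the relations $[e_1,f_1]=e_2+f_{k+1}$ and $[e_i,f_1]=e_{i+1}$ for $2\le i\le n-2k-1$, which are what distinguish $\mu_2$ from $\mu_1$. First I would observe that $\{e_1,f_1,f_2,\dots,f_k\}$ again generate the algebra, so I set
\[
\varphi(e_1)=\sum_{i=1}^{n-2k}a_ie_i+\sum_{i=1}^{2k}b_if_i,\qquad
\varphi(f_i)=\sum_{j=1}^{n-2k}c_{j,i}e_j+\sum_{j=1}^{2k}d_{j,i}f_j,\quad 1\le i\le k,
\]
and try to force all other images from the Leibniz products.

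Next I would compute $\varphi(e_2)$. The crucial difference from $\mu_1$ is that $e_2$ is no longer reachable only through $[e_1,e_1]$; rather it also appears via $[e_1,f_1]=e_2+f_{k+1}$. I expect that the bracket $[\varphi(e_1),\varphi(e_1)]$ together with $[\varphi(e_1),\varphi(f_1)]$ will combine to produce the factor $(a_1+b_1)$ in place of the single $a_1$ seen in $\mu_1$: the term $b_1$ enters because acting by $f_1$ on the $e$-chain shifts indices the same way $e_1$ does (since $[e_i,f_1]=e_{i+1}$ for $i\ge 2$). This is the source of the $(a_1+b_1)^{i-1}a_1$ coefficients in $\varphi_{1,1}$. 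I would then run the induction on $i$ to get $\varphi(e_i)$ for $3\le i\le n-2k$, at each stage using both $[e_{i-1},e_1]$ and $[e_{i-1},f_1]$, and read off the stated entries of $\varphi_{1,1}$ and $\varphi_{2,1}$.

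After the $e$-chain is pinned down I would turn to the $f$-variables. Evaluating $\varphi([f_i,e_1])=0$ and $\varphi([f_i,f_j])=0$ should force the vanishing conditions $c_{j,i}=0$ for $1\le j\le n-2k-1$ (so $\varphi_{1,2}$ is supported only on the $e_{n-2k}$ row, giving the $c_i$ entries) together with $a_1\ne 0$ for bijectivity. Then from $\varphi(f_{k+i})=\varphi([e_1,f_i])$ I would extract the block structure of $\varphi_{2,2}$: the top-left block $\varphi_{2,2}^{(1)}$ carries the $d_{j,i}$ entries with the distinguished $(1,1)$-entry equal to $a_1+b_1$ (again the signature of the $e_2+f_{k+1}$ relation feeding back into the $f$-component), and the bottom-right block satisfies $\varphi_{2,2}^{(3)}=a_1\varphi_{2,2}^{(1)}-a_1\sum_j b_j e_{j,1}$, the correction term $-a_1\sum_j b_j e_{j,1}$ being precisely the contribution of the mixed product $[e_1,f_1]=e_2+f_{k+1}$.

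The main obstacle will be the bookkeeping in the induction step for $\varphi(e_i)$ and in the computation of $\varphi(f_{k+i})$, where the single relation $[e_1,f_1]=e_2+f_{k+1}$ couples the $e$-chain and the $f$-chain and must be tracked simultaneously; a naive copy of the $\mu_1$ argument would miss exactly the $b_1$-shifts and the correction term $-a_1\sum_j b_j e_{j,1}$. I would therefore organize the calculation so that at each inductive step the $e$- and $f$-contributions are separated, verify consistency of the coefficients against the claimed matrix form, and finally confirm that the resulting $\Phi$ is invertible iff $a_1\ne 0$ (together with invertibility of $\varphi_{2,2}^{(1)}$), which closes the description.
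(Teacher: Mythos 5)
Your proposal is correct and takes essentially the same approach as the paper: the paper's own proof of this proposition is a one-line deferral to the generator-propagation computation of Proposition \ref{prop1}, which is exactly what you carry out, correctly tracking how the extra relations $[e_1,f_1]=e_2+f_{k+1}$ and $[e_i,f_1]=e_{i+1}$ produce the $(a_1+b_1)$ powers, the entry $d_{1,1}=a_1+b_1$, and the correction $-a_1\sum_j b_j e_{j,1}$ in $\varphi_{2,2}^{(3)}$. One small precision: the factor $(a_1+b_1)$ already arises inside $[\varphi(e_1),\varphi(e_1)]$ alone, via the $b_1f_1$ component of $\varphi(e_1)$ acting through $[e_i,f_1]=e_{i+1}$, while $[\varphi(e_1),\varphi(f_1)]$ serves instead to determine $\varphi(f_{k+1})$ once $\varphi(e_2)$ is known.
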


\begin{proof}The proof follows by straightforward calculations similarly to the proof of Proposition \ref{prop1}.

\end{proof}

\begin{prop} \label{prop3} Any automorphisms of the algebra $\mu_3$ has the following matrix form:

\begin{eqnarray*}
& \Phi & =\begin{pmatrix}
\varphi_{1,1} & \varphi_{1,2}\\
\varphi_{2,1} & \varphi_{2,2}
\end{pmatrix},
\end{eqnarray*}
where
\begin{eqnarray*}
& \varphi_{1,1}= & a_1e_{1,1}+\sum_{i=2}^{n-2k}a_1^{i-2}(a_{1}+a_2)e_{i,i}+\sum_{i=2}^{n-2k}a_{i}e_{i,1}+\sum_{i=3}^{n-2k-1}a_{i}e_{i,2}+\\
&&+\beta e_{n-2k,2}+\sum_{i=3}^{n-2k-1}\sum_{j=i+1}^{n-2k}a_1^{j-3}a_{j-i+2}e_{j,i},\\
&\varphi_{2,1}= & \sum_{i=1}^{2k}b_{i,1}e_{i,1}+\sum_{i=1}^{k}b_{i,2}e_{k+i,2}+(a_1+a_2)\sum_{i=1}^{k}b_{i,1}e_{k+i,3},\quad  \varphi_{1,2}= \sum_{i=1}^{k}c_ie_{n-2k,i},\\
& \varphi_{2,2}= & \begin{pmatrix}
\varphi_{2,2}^{(1)} & 0\\
\varphi_{2,2}^{(2)} &(a_1+a_2)\varphi_{2,2}^{(1)}
\end{pmatrix}
\end{eqnarray*}
$$\varphi_{1,1}\in M_{n-2k,n-2k}, \ \varphi_{2,1}\in M_{2k,n-2k}, \ \varphi_{1,2}\in M_{n-2k,2k}, \ \varphi_{2,2}^{(1)},\varphi_{2,2}^{(2)}\in M_{k,k}.$$
Let $\{e_{i,j}: 1 \leq i,j\leq n\}$ -- be the system of matrix units, i.e., the $(n\times n)$-matrix $e_{i,j}$ is such that the
$(i,j)$th component is equal to 1 and all other components are zeros.
\end{prop}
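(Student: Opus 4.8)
The plan is to mirror the proof of Proposition \ref{prop1}, using that $\mu_3$ is generated by $\{e_1,e_2,f_1,\dots,f_k\}$: indeed $e_3=[e_1,e_1]$, $e_{i+1}=[e_i,e_1]$ for $i\ge 2$, and $f_{k+j}=[e_1,f_j]$, so an automorphism $\varphi$ is completely determined by its values on these generators. I would start by writing $\varphi(e_1)=\sum_{i=1}^{n-2k}a_ie_i+\sum_{i=1}^{2k}b_{i,1}f_i$, $\varphi(e_2)=\sum_{i=1}^{n-2k}a'_ie_i+\sum_{i=1}^{2k}b_{i,2}f_i$, and $\varphi(f_j)=\sum_{i=1}^{n-2k}c_{i,j}e_i+\sum_{i=1}^{2k}d_{i,j}f_i$. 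The feature of $\mu_3$ that drives the whole computation is the twofold generation of the degree-two elements, namely $[e_1,e_1]=[e_2,e_1]=e_3$ and $[e_1,f_j]=[e_2,f_j]=f_{k+j}$. Because of this, every bracket $[\,\cdot\,,\varphi(e_1)]$ or $[\varphi(e_1),\,\cdot\,]$ I compute will see the combination $a_1+a_2$ in place of $a_1$ alone, and this scalar is exactly the one appearing throughout the stated matrix.

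First I would fix the images of the $e_i$. Computing $\varphi(e_3)=[\varphi(e_1),\varphi(e_1)]$ and using the products above collapses the bracket to
\[
\varphi(e_3)=a_1(a_1+a_2)e_3+a_1\sum_{i=3}^{n-2k-1}a_ie_{i+1}+(a_1+a_2)\sum_{j=1}^{k}b_{j,1}f_{k+j},
\]
already exhibiting both the scalar $a_1+a_2$ and the $f$-tail that will be collected in $\varphi_{2,1}$. Iterating $\varphi(e_{i+1})=[\varphi(e_i),\varphi(e_1)]$ for $i\ge 3$ then produces, by induction, the powers $a_1^{j-3}$ and the off-diagonal entries $a_1^{j-3}a_{j-i+2}e_{j,i}$ of $\varphi_{1,1}$, together with the $e\to f$ column of $\varphi_{2,1}$. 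To pin down $\varphi(e_2)$ I would use three relations simultaneously: $[e_1,e_2]=0$ forces $a'_1=0$; the redundancy $[e_2,e_1]=e_3=[e_1,e_1]$ (i.e. $[\varphi(e_2)-\varphi(e_1),\varphi(e_1)]=0$) forces $a'_i=a_i$ for $3\le i\le n-2k-1$ and $a'_2=a_1+a_2$, while leaving the top coefficient $a'_{n-2k}=:\beta$ free, since $e_{n-2k}$ lies in the kernel of right multiplication by $\varphi(e_1)$; and $[e_2,e_2]=0$ kills the low $f$-part of $\varphi(e_2)$, i.e. the coefficients along $f_1,\dots,f_k$. This reproduces column $2$ of $\varphi_{1,1}$, the term $\beta e_{n-2k,2}$, and the free block $\sum_i b_{i,2}e_{k+i,2}$ of $\varphi_{2,1}$.

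The $f$-generators are subtler, and this is where I expect the main obstacle. Applying $\varphi$ to $[f_j,e_1]=0$ gives
\[
a_1(c_{1,j}+c_{2,j})e_3+a_1\sum_{i=3}^{n-2k-1}c_{i,j}e_{i+1}+(c_{1,j}+c_{2,j})\sum_{l=1}^{k}b_{l,1}f_{k+l}=0,
\]
which, using $a_1\ne 0$, only yields $c_{i,j}=0$ for $3\le i\le n-2k-1$ and $c_{2,j}=-c_{1,j}$; crucially it does \emph{not} by itself force $c_{1,j}=0$, because $e_1-e_2$ acts trivially in degree one. The vanishing of $c_{1,j}$ has to be recovered one degree higher: computing $\varphi(f_{k+j})=[\varphi(e_1),\varphi(f_j)]$ shows its $e$-component is proportional to $c_{1,j}$, with leading term $c_{1,j}(a_1+a_2)e_3$, and then applying $\varphi$ to $[f_{k+j},e_1]=0$ forces that $e$-component to vanish, hence $c_{1,j}=0$ once $a_1+a_2\ne 0$. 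After this, $\varphi(f_j)=c_je_{n-2k}+\sum_l d_{l,j}f_l$, giving $\varphi_{1,2}=\sum_j c_je_{n-2k,j}$, while $\varphi(f_{k+j})=(a_1+a_2)\sum_l d_{l,j}f_{k+l}$ yields both the vanishing upper-right block of $\varphi_{2,2}$ and the relation that its lower-right block is $(a_1+a_2)\varphi_{2,2}^{(1)}$, with $\varphi_{2,2}^{(1)}=(d_{l,j})$ and the free lower-left block $\varphi_{2,2}^{(2)}=(d_{k+l,j})$.

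Finally, bijectivity reduces, through the lower central series, to invertibility of the diagonal blocks $\varphi_{1,1}$ and $\varphi_{2,2}$: since $\varphi_{1,1}$ is lower triangular with diagonal $a_1,(a_1+a_2),a_1(a_1+a_2),\dots,a_1^{n-2k-2}(a_1+a_2)$ and $\det\varphi_{2,2}=(a_1+a_2)^k(\det\varphi_{2,2}^{(1)})^2$, the conditions are exactly $a_1\ne 0$, $a_1+a_2\ne 0$, and $\det\varphi_{2,2}^{(1)}\ne 0$. Collecting the surviving coefficients into the four blocks gives the stated form of $\Phi$, and a direct check that all defining products of $\mu_3$ are preserved confirms that every such $\Phi$ is an automorphism. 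The one genuinely delicate point, as noted, is that the constraints on $\varphi(f_j)$ are not all visible in degree one: the coefficient $c_{1,j}$ is eliminated only after passing to $\varphi(f_{k+j})$ and invoking $[f_{k+j},e_1]=0$, and keeping the twofold generation of $e_3$ and of the $f_{k+j}$ mutually consistent throughout the induction is where the bookkeeping must be done with care.
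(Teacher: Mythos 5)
Your strategy is exactly the paper's: the paper's proof of this proposition is a one-line deferral to the method of Proposition \ref{prop1}, and you carry out precisely that generator-based computation, with the right generating set $\{e_1,e_2,f_1,\dots,f_k\}$ and the right relations. Your treatment of the $f$-generators is in fact more careful than anything the paper records: the observation that $[f_j,e_1]=0$ only yields $c_{i,j}=0$ for $3\le i\le n-2k-1$ and $c_{2,j}=-c_{1,j}$, and that $c_{1,j}=0$ must be recovered one degree higher from $\varphi(f_{k+j})=[\varphi(e_1),\varphi(f_j)]$ and $[f_{k+j},e_1]=0$ together with $a_1+a_2\neq 0$, is correct and is the genuinely delicate point of the argument.

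There is, however, one concrete inconsistency you need to resolve. Your formula for $\varphi(e_3)$ is correct:
$$\varphi(e_3)=a_1(a_1+a_2)e_3+a_1\sum_{i=3}^{n-2k-1}a_ie_{i+1}+(a_1+a_2)\sum_{j=1}^{k}b_{j,1}f_{k+j},$$
so the entry of $\varphi_{1,1}$ in position $(j,3)$ is $a_1a_{j-1}$ for every $j\geq 4$. But you then assert that the induction ``produces the powers $a_1^{j-3}$ and the off-diagonal entries $a_1^{j-3}a_{j-i+2}e_{j,i}$,'' which for $i=3$ and $j\geq 5$ reads $a_1^{j-3}a_{j-1}$ --- contradicting your own display. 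Iterating $\varphi(e_{i+1})=[\varphi(e_i),\varphi(e_1)]$ actually gives the entry $a_1^{i-2}a_{j-i+2}$ at position $(j,i)$: the exponent is constant down each column (exactly as in Proposition \ref{prop1}, where it is $a_1^{i-1}$), not dependent on the row. The two expressions agree only on the first subdiagonal $j=i+1$, which is presumably why the discrepancy went unnoticed. In other words, the proposition as printed contains a typo ($a_1^{j-3}$ should be $a_1^{i-2}$), and your write-up silently conforms to the typo instead of to your own correct computation; no induction can produce the row-dependent exponents as printed, so a complete proof should state and prove the corrected form of $\varphi_{1,1}$.
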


\begin{proof}The proof follows by straightforward calculations similarly to the proof of Proposition \ref{prop1}.

\end{proof}

\begin{rem}\label{remsec 1.4.1}
The dimensions of the space of automorphisms of the algebras $\mu_{1}, \mu_{2}$ and $\mu_{3}$ are
\begin{eqnarray*}
& \dim Aut(\mu_{1})= & n+2k^2+k,    \\
& \dim Aut(\mu_{2})= & n+2k^2+1 ,    \\
& \dim Aut(\mu_{3})= & n+2k^2+2k+1,
\end{eqnarray*}
where $k\in \mathbb{N}$ and $n\geq 2k+4.$
\end{rem}

\section{Local automorphisms of naturally graded non-Lie $p$-filiform Leibniz  algebras}

In the following theorem we give the description of local automorphisms of the algebra $\mu_{1}$.
\begin{thm}\label{sec1.4.2}   Let $\Delta$ be a linear operator on $\mu_{1}$. Then $\Delta$ is a local automorphisms, if and only if its matrix has the form:
\begin{equation}\label{for1.4.1}
\Delta= \left(\begin{array}{cc}
\Delta_{1,1} & \Delta_{1,2}\\
\Delta_{2,1} & \Delta_{2,2}
\end{array}\right),
\end{equation}
where
$$
 \Delta_{1,1}= \sum\limits_{j=1}^{n-2k}\sum\limits_{i=j}^{n-2k}\gamma_{i,j}e_{i,j}, \quad     \Delta_{2,1}=\sum\limits_{i=n-2k+1}^{n}\gamma_{i,1}e_{i,1}+\sum\limits_{i=n-k+1}^{n}\gamma_{i+1,2}e_{i+1,2},$$
$$ \Delta_{1,2}= \sum\limits_{i=n-2k+1}^{n-k}\gamma_{n-2k,i}e_{n-2k,i},$$
$$ \Delta_{2,2}= \begin{pmatrix}
\Delta_{2,2}^{(1)} & 0\\
\Delta_{2,2}^{(2)} & \Delta_{2,2}^{(3)}
\end{pmatrix},$$
$$
 \Delta_{2,2}^{(1)} =  \sum\limits_{j=n-2k+1}^{n-k}\sum\limits_{j=n-2k+1}^{n-k}\gamma_{i,j}e_{i,j},\quad
\Delta_{2,2}^{(2)}=
\sum\limits_{j=n-2k+1}^{n-k}\sum\limits_{j=n-k+1}^{n}\gamma_{i,j}e_{i,j},$$
$$\Delta_{2,2}^{(3)} =
\sum\limits_{j=n-k+1}^{n}\sum\limits_{j=n-k+1}^{n}\gamma_{i,j}e_{i,j}.
$$
\end{thm}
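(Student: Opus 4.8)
The plan is to prove both implications separately, using the explicit description of $\mathrm{Aut}(\mu_1)$ obtained in Proposition \ref{prop1} as the main tool. Throughout, I work in the fixed basis $\{e_1,\dots,e_{n-2k},f_1,\dots,f_{2k}\}$ and read the matrix of $\Delta$ column by column.

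For the necessity direction, suppose $\Delta$ is a local automorphism. The strategy is to evaluate $\Delta$ on each basis vector and read off the admissible support from the automorphism form. For each index there is an automorphism (with matrix $\Phi$ as in Proposition \ref{prop1}, for some choice of parameters $a_\bullet, b_\bullet, c_\bullet, d_{\bullet,\bullet}$) agreeing with $\Delta$ on that vector, so $\Delta(e_i)=\varphi_{e_i}(e_i)$ and $\Delta(f_j)=\varphi_{f_j}(f_j)$. From the computation in the proof of Proposition \ref{prop1}, $\varphi(e_1)$ may have arbitrary components, which accounts for the first column; $\varphi(e_i)$ is supported only on $e_i,\dots,e_{n-2k}$ for $i\geq 3$, forcing the corresponding columns of $\Delta_{1,1}$ to be lower triangular and to vanish elsewhere, while $\varphi(e_2)$ carries in addition components along $f_{k+1},\dots,f_{2k}$. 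The images $\varphi(f_j)$ likewise dictate the support of the remaining columns: for $1\leq j\leq k$, the relation $c_{t,j}=0$ for $t\leq n-2k-1$ confines $\varphi(f_j)$ to $\mathrm{span}\{e_{n-2k},f_1,\dots,f_{2k}\}$, while $\varphi(f_{k+j})$ lies in $\mathrm{span}\{f_{k+1},\dots,f_{2k}\}$. Collecting these support constraints yields exactly the block form asserted for $\Delta_{1,1},\Delta_{1,2},\Delta_{2,1}$ and $\Delta_{2,2}$.

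For the sufficiency direction, suppose $\Delta$ has the stated matrix form, and fix an arbitrary $x=\sum_{i=1}^{n-2k}\alpha_i e_i+\sum_{j=1}^{2k}\beta_j f_j$. I must produce an automorphism $\varphi_x$ with $\varphi_x(x)=\Delta(x)$; equivalently, I must choose automorphism parameters so that $\Phi$ applied to the coordinate vector of $x$ reproduces $\Delta(x)$. Writing this out coordinate by coordinate gives a system in the automorphism parameters, and since $\dim\mathrm{Aut}(\mu_1)=n+2k^2+k$ exceeds the number of coordinates in the generic situation, there is ample freedom to solve it. The natural approach is a case analysis according to whether the leading coefficient $\alpha_1$ is nonzero: when $\alpha_1\neq 0$ one normalizes $a_1$ and solves triangularly for the remaining $a_\bullet,b_\bullet$ from the $e$-block, then for $c_\bullet$ and the invertible $k\times k$ matrix $d_{\bullet,\bullet}$ from the $f$-block; when $\alpha_1=0$ the generator data degenerates and a separate choice of parameters, still with $a_1\neq 0$ as required for $\Phi$ to be invertible, is needed.

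The principal obstacle is precisely this sufficiency direction in the degenerate cases where $x$ has vanishing leading components. There the triangular solve breaks down, and one must verify by hand that the prescribed entries of $\Delta(x)$ still lie in the orbit $\{\Phi\, x : \Phi\in\mathrm{Aut}(\mu_1)\}$, exploiting the remaining free parameters $b_\bullet$, $c_\bullet$ and the block $d_{\bullet,\bullet}$ to absorb the values imposed by $\Delta$. Confirming that every such system is consistent while keeping $a_1\neq 0$, so that the resulting $\Phi$ is a genuine automorphism, is where the real work lies; the necessity direction, by contrast, is a mechanical column-by-column reading of the support pattern.
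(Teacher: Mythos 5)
Your strategy is the same as the paper's: necessity by comparing $\Delta$ on each basis vector with the images under automorphisms of the form given in Proposition~\ref{prop1}, and sufficiency by solving for the automorphism parameters with a case analysis on the first nonvanishing coordinate of $x$. The necessity half of your proposal is essentially the paper's argument and is sound.

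The gap is in the sufficiency direction, which you outline but never actually prove. Your justification that the system $\Phi x=\Delta(x)$ is solvable because $\dim\mathrm{Aut}(\mu_1)=n+2k^2+k$ ``exceeds the number of coordinates'' is not a valid argument: the set $\{\Phi x:\Phi\in\mathrm{Aut}(\mu_1)\}$ is a nonlinear orbit, not a linear subspace, so a parameter count cannot rule out that the orbit misses the point $\Delta(x)$ --- particularly in view of the open constraint $a_1\neq 0$ and the coupling of parameters across coordinates (for instance, in the paper's parametrization the same $b_{i-k}$ and $d_{j,i-k}$ enter both the $f_{i-k}$-coordinate and the $f_i$-coordinate of $\varphi(x)$, so these equations cannot be solved independently). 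You then explicitly set aside the degenerate cases where the leading coordinates of $x$ vanish as ``where the real work lies,'' which is precisely the part that must be done: the paper writes out the full system \eqref{for1.4.2} and exhibits, in five separate cases according to the first nonzero coordinate among $\xi_1,\dots,\xi_{n-2k},\zeta_1,\dots,\zeta_{2k}$, an explicit choice of parameters (setting most of the free parameters to zero and solving triangularly for the remaining ones, always keeping $a_1\neq0$) for which $\varphi(x)=\Delta(x)$. Without carrying out this case-by-case verification, the ``if'' direction of the theorem remains unproved in your proposal.
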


\begin{proof}
 $(\Rightarrow)$ Assume that $\Delta$ is a local automorphisms  of $\mu_{1}:$
\begin{eqnarray*}
\Delta= \left(\begin{array}{cc}
\Delta_{1,1} & \Delta_{1,2}\\
\Delta_{2,1} & \Delta_{2,2}
\end{array}\right),
\end{eqnarray*}
where
\begin{eqnarray*}
& \Delta_{1,1}=& \sum\limits_{j=1}^{n-2k}\sum\limits_{i=1}^{n-2k}\gamma_{i,j}e_{i,j},\quad
\Delta_{2,1} = \sum\limits_{i=n-2k+1}^{n}\sum\limits_{j=1}^{n-2k}\gamma_{i,j}e_{i,j},\\
& \Delta_{1,2}=& \sum\limits_{j=n-2k+1}^{n}\sum\limits_{i=1}^{n-2k}\gamma_{i,j}e_{i,j},\quad
 \Delta_{2,2}=  \sum\limits_{j=n-2k+1}^{n}\sum\limits_{j=n-2k+1}^{n}\gamma_{i,j}e_{i,j}.
\end{eqnarray*}

Take a automorphism $\varphi_{e_{2}}$ such that $\Delta(e_{2})=\varphi_{e_{2}}(e_{2})$. Then
\begin{eqnarray*}
& \Delta(e_{2})   =& \sum_{j=1}^{n-2k}\gamma_{j,2}e_{2}+\sum_{j=n-2k+1}^{n}\gamma_{j,2}f_{j-n+2k},\\
& \varphi_{e_{2}}(e_{2})=& a_1\sum_{i=2}^{n-2k}a_{i-1}e_{i}+a_1\sum_{i=1}^kb_{i}f_{k+i}.
\end{eqnarray*}

Comparing the coefficients, we conclude that
$\gamma_{1,2}=\gamma_{n-2k+i,2}=0$ for $1\leq i\leq k$.

We take a automorphism $\varphi_{e_{i}}$ such that
$\Delta(e_{i})=\varphi_{e_{i}}(e_{i}),$ where $3\leq i\leq n-2k.$ Then
\begin{eqnarray*}
& \Delta(e_{i})   =& \sum_{j=1}^{n-2k}\gamma_{j,i}e_{j}+\sum_{j=n-2k+1}^{n}\gamma_{j,i}f_{j-n+2k},\\
& \varphi_{e_{i}}(e_{i})=& a_{1}^{i-1}\sum_{j=i}^{n-2k}a_{j-i+1}e_{j}.
\end{eqnarray*}

Comparing the coefficients at the basis elements for $\Delta(e_{i})$ and $\varphi_{e_{i}}(e_{i})$, we obtain the identities  $$\gamma_{t,j}=\gamma_{n-2k+i,j}=0,\quad 3\leq j\leq n-2k,\  1\leq i\leq 2k,\  1\leq t\leq n-2k-1.$$

We take a automorphism $\varphi_{f_{i}}$ such that $\Delta(f_{i})=\varphi_{f_{i}}(f_{i}),$ where $1\leq i\leq k$.
Then
\begin{eqnarray*}
& \Delta(f_{i})   =& \sum_{j=1}^{n-2k}\gamma_{j,n-2k+i}e_{j}+\sum_{j=n-2k+1}^{n}\gamma_{j,n-2k+i}f_{j-n+2k},\\
& \varphi_{f_{i}}(f_{i})=& c_{n-2k,i}e_{n-2k}+\sum_{j=1}^{2k}d_{j,i}f_{j}.
\end{eqnarray*}

Comparing the coefficients at the basis elements for  $\Delta(f_{i})$ and $\varphi_{f_{i}}(f_{i})$, we obtain
 $$\gamma_{j,i}=\gamma_{j,n-2k+i}=0,\quad 1\leq j\leq n-2k-1,\ 1\leq i\leq k-1.$$

Now, take a  automorphism  $\varphi_{f_{i}}$ such that $\Delta(f_{i})=\varphi_{f_{i}}(f_{i}),$ where  $k+1\leq i\leq 2k$. Then
\begin{eqnarray*}
& \Delta(f_{i})   =& \sum_{j=1}^{n-2k}\gamma_{j,n-2k+i}e_{j}+\sum_{j=n-2k+1}^{n}\gamma_{j,n-2k+i}f_{j-n+2k},\\
& \varphi_{f_{i}}(f_{i})=& a_1\sum_{j=1}^{k}d_{j,i-k}f_{k+j},
\end{eqnarray*}
which implies
$$\gamma_{j,i}=0, \quad 1\leq j\leq n-k, \ n-k+1\leq i\leq n.$$

 $(\Leftarrow)$ Assume that the operator $\Delta$ has the form (\ref{for1.4.1}).
For an arbitrary element
$$x=\sum\limits_{i=1}^{n-2k}
\xi_{i}e_{i}+\sum\limits_{i=1}^{2k}\zeta_{i}f_{i},$$
we have
\begin{eqnarray*}
&\varphi(x)_{e_1}   = & a_{1}\xi_{1},\\
& \varphi(x)_{e_i}   = & a_{i}\xi_{1}+\sum_{j=1}^{i-2}a^{j}_1a_{i-j}\xi_{j+1}+a_{1}^i\xi_{i}, \ \ \ \ 2\leq i\leq n-2k-1,\\
& \varphi(x)_{e_{n-2k}}= & a_{n-2k}\xi_{1}+ \sum_{j=1}^{n-2k-2}a_{n-2k-j}\xi_{j+1}+(n-2k)a_{1}\xi_{n-2k}+
\sum_{j=1}^kc_{j,n-2k}\zeta_{j},\\
& \varphi(x)_{f_{i}}   = & b_{i}\xi_{1}+\sum_{j=1}^kd_{j,i}\zeta_{j},\ \ \ \ \ \ 1\leq i\leq k,\\
& \varphi(x)_{f_{i}}   = & b_{i}\xi_{1}+b_{i-k}\xi_{2}+\sum_{j=1}^kd_{j,i}\zeta_{j}+\\
&&+a_{1}\sum_{j=1}^kd_{j,i-k}\zeta_{k+j},\ \ k+1\leq i\leq 2k.\\
\end{eqnarray*}
The coordinates of $\Delta(x)$ are

\begin{eqnarray*}
& \Delta(x)_{e_1}   =& \gamma_{11}\xi_{1},\\
& \Delta(x)_{e_i}   =& \sum_{j=1}^{i}\gamma_{i,j}\xi_{j}, \ \ \ \ 2\leq i\leq {n-2k-1},\\
& \Delta(x)_{e_{n-2k}}=& \sum_{j=1}^{n-2k}\gamma_{n-2k,j}\xi_{j}+\sum_{j=1}^{k}\gamma_{n-2k,n-2k+j}\zeta_{j},\\
& \Delta(x)_{f_i}   =& \gamma_{n-2k+i,1}\xi_{1}+\sum_{j=1}^k\gamma_{n-2k+i,n-2k+j}\zeta_{j},\ \ \ \ \ \ 1\leq i\leq k,\\
& \Delta(x)_{f_i}   =& \gamma_{n-2k+i,1}\xi_{1}+\gamma_{n-2k+i,2}\xi_{2}+\sum_{j=1}^{2k}\gamma_{n-2k+i,n-2k+j}
\zeta_{j},\,\, k+1\leq i\leq 2k.\\
\end{eqnarray*}
Comparing the coordinates of $\Delta(x)$ and $D(x),$ we obtain
\begin{equation}\label{for1.4.2}
{\small\left\{
\begin{array}{llllllllllll}
a_{1}\xi_{1} & =  \gamma_{11}\xi_{1} \\
a_{i}\xi_{1}+\sum\limits_{j=1}^{i-2}a^{j}_1a_{i-j}\xi_{j+1}+a_{1}^i\xi_{i}
& =  \sum\limits_{j=1}^{i}\gamma_{i,j}\xi_{j},\,\,\,\,  2\leq i\leq {n-2k-1}, \\
a_{n-2k}\xi_{1}+ \sum\limits_{j=1}^{n-2k-2}a_{n-2k-j}\xi_{j+1}+(n-2k)a_{1}\xi_{n-2k}+
 & \,
\\
+ \sum\limits_{j=1}^kc_{j,n-2k}\zeta_{j}
& =   \sum\limits_{j=1}^{n-2k}\gamma_{n-2k,j}\xi_{j}+\sum\limits_{j=1}^{k}\gamma_{n-2k,n-2k+j}\zeta_{j},\\
b_{i}\xi_{1}+\sum\limits_{j=1}^kd_{j,i}\zeta_{j}
& =  \gamma_{n-2k+i,1}\xi_{1}+\\
\, & +\sum\limits_{j=1}^k\gamma_{n-2k+i,n-2k+j}\zeta_{j},\,\,\,   1\leq i\leq k,\\
b_{i}\xi_{1}+b_{i-k}\xi_{2}+\sum\limits_{j=1}^kd_{j,i}\zeta_{j}+&
\,\\
+ a_{1}\sum\limits_{j=1}^kd_{j,i-k}\zeta_{k+j} & =
\gamma_{n-2k+i, 1}\xi_{1}
+\gamma_{n-2k+i,2}\xi_{2}+\\
\, & +\sum\limits_{j=1}^{2k}\gamma_{n-2k+i,n-2k+j}\zeta_{j},
\,\,\, k+1\leq i\leq 2k.
\end{array} \right.}
\end{equation}

We show the solvability of this system of equations with respect to $a_i,  b_i, c_i$ and  $d_{i,j}.$ For this purpose we consider the following possible cases.

\textbf{Case 1.} Let  $\xi_{1}\neq0$, then  putting
$c_{i}=d_{i,j}=0, \ 1\leq i\leq k,\  1\leq j\leq k$ from \eqref{for1.4.2}
we uniquely determine $a_1, a_2,\ldots, a_{n-2k}, b_1, b_2, \ldots, b_{2k}.$

\textbf{Case 2.} Let  $\xi_{1}=0$ and $\xi_{2}\neq0$, then putting $c_{i}=d_{i,j}=0, \ 1\leq i\leq k,\ 1\leq j\leq
k$ we uniquely determine remaining unknowns $a_1,a_2, \ldots, a_{n-2k}, b_1, b_2, \ldots, b_{k}$.

\textbf{Case 3.} Let $\xi_{1}=\xi_{2}=\dots=\xi_{r-1}=0$ and
$\xi_{r}\neq0, \ 3\leq r \leq n-2k$. Then putting
$$a_2=\ldots=a_{n-2k-m} = b_1 = \ldots = c_1=\ldots =d_{t,j}=0, \ 1\leq t\leq k,\ 1\leq j\leq k.$$
we determine unknowns $a_1,\ a_{n-2k-m+1}, \ i \leq m \leq n-2k$.

\textbf{Case 4.} Let $\xi_{1}=\ldots=\xi_{n-2k}=\zeta_1=\ldots
=\zeta_{r-1}=0$ and $\zeta_{r}\neq0, \ 1\leq r \leq k$. Then setting
$$a_2=\ldots=b_1=\ldots=0,\ c_{i}=0,\, i\neq r,\ d_{j,i}=0, \ j\neq r,$$
we determine $a_1,\ c_r, d_ {r,i}, \ 1 \leq i \leq k$.

 \textbf{Case 5.} Let $\xi_{1}=\ldots =\xi_{n-2k}=\zeta_1=\ldots =\zeta_{k+r-1}=0$ and
 $\zeta_{k+r}\neq0, \ 1\leq r \leq k$. Then setting
 $$a_2=\ldots=b_1=\ldots=c_1=\ldots=0,\ d_{j, i}=0,\ r\neq k+r,$$
we obtain that the unknowns $a_1,\ d_{k+r,i}, k+1\leq i \leq 2k,$ are uniquely determined from \eqref{for1.4.2}.
\end{proof}

In the following theorems we obtain the descriptions of local automorphism of the algebras  $\mu_{2}$ and $\mu_3$.

\begin{thm}\label{sec1.4.3}
Let $\Delta$ be a linear operator on $\mu_{2}$. Then $\Delta$
is a local automorphism if and only if its matrix has the form:
\begin{eqnarray*}
\Delta= \left(\begin{array}{cc}
\Delta_{1,1} & \Delta_{1,2}\\
\Delta_{2,1} & \Delta_{2,2}
\end{array}\right),
\end{eqnarray*}
where
$$
 \Delta_{1,1}=  \sum\limits_{j=1}^{n-2k}\sum\limits_{i=j}^{n-2k}\alpha_{j,i}e_{j,i}, \quad
\Delta_{2,1}= \sum\limits_{i=1}^{2k}\beta_{n-2k+i,1}e_{n-2k+i,1}+\sum\limits_{i=1}^{k}b_{n-2k+i,2}e_{n-2k+i,2},$$
$$
\Delta_{1,2}=  \sum_{i=1}^{k}\gamma_{n-2k,n-2k+i}e_{n-2k,n-2k+i},$$
$$
\Delta_{2,2}=  \begin{pmatrix}
\Delta_{2,2}^{(1)} & 0  \\
\Delta_{2,2}^{(2)} & \Delta_{2,2}^{(3)}
\end{pmatrix},$$
$$
\Delta_{2,2}^{(1)} = \delta_{n-2k+1,n-2k+1}e_{n-2k+1,n-2k+1}+ \sum\limits_{i=n-2k+1}^{n-k}\sum\limits_{j=n-2k+2}^{n-k}\delta_{j,i}e_{j,i},$$
$$
\Delta_{2,2}^{(2)}= \sum\limits_{i=n-k+1}^{n-k}\sum\limits_{j=n-k+1}^{n}\delta_{j,i}e_{j,i},$$
$$
\Delta_{2,2}^{(3)}=\delta_{n-k+1,n-k+1}e_{n-k+1,n-k+1}+\sum\limits_{i=n-k+1}^{n}\sum\limits_{j=n-k+2}^{n}\delta_{j,i}e_{j,i}.$$
\end{thm}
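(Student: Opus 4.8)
The plan is to follow the template of the proof of Theorem~\ref{sec1.4.2}, replacing the use of Proposition~\ref{prop1} by Proposition~\ref{prop2}, which records the explicit matrix form of the automorphisms of $\mu_2$. The generator basis is again $\{e_1,f_1,f_2,\dots,f_k\}$, and an arbitrary automorphism $\varphi$ is parametrized by the scalars $a_1,\dots,a_{n-2k}$, $b_1,\dots,b_{2k}$, $c_1,\dots,c_k$ and $d_{j,i}$ appearing in Proposition~\ref{prop2}. The two implications are handled separately.

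For the forward direction $(\Rightarrow)$, I would start from a linear operator $\Delta$ written in block form with completely general entries and evaluate the local-automorphism condition on each of the basis vectors $e_i$ and $f_i$. For a fixed $x$, the image $\varphi_x(x)$ must have the shape dictated by Proposition~\ref{prop2}, so comparing the coordinates of $\Delta(x)$ with those of $\varphi_x(x)$ forces the vanishing of every entry of $\Delta$ lying outside the triangular blocks described in the statement. Concretely, feeding in $e_2,\dots,e_{n-2k}$ eliminates the entries of $\Delta_{1,1}$ off the claimed pattern together with the spurious $f$-components of those columns; feeding in $f_1,\dots,f_k$ pins down $\Delta_{1,2}$ and the block $\Delta_{2,2}^{(1)}$; and feeding in $f_{k+1},\dots,f_{2k}$ pins down $\Delta_{2,2}^{(2)}$ and $\Delta_{2,2}^{(3)}$. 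The only genuinely new bookkeeping compared with $\mu_1$ is the appearance of the factor $(a_1+b_1)$ in the diagonal of $\varphi_{1,1}$ and the leading entry of $\varphi_{2,2}^{(1)}$, which I would track carefully so that the surviving free parameters land exactly in the positions claimed for $\Delta$.

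For the converse $(\Leftarrow)$, I would assume $\Delta$ has the stated form and, for an arbitrary $x=\sum_i\xi_i e_i+\sum_i\zeta_i f_i$, write down both the coordinates of $\varphi(x)$ (using the parametrization of Proposition~\ref{prop2}) and the coordinates of $\Delta(x)$, and set them equal. This produces a system analogous to \eqref{for1.4.2} in the unknowns $a_i,b_i,c_i,d_{j,i}$, and the task is to exhibit, for every $x$, a solution corresponding to a genuine automorphism (in particular with $a_1\neq 0$ and $a_1+b_1\neq 0$). I would dispose of this by the same case analysis as for $\mu_1$, according to the index of the first nonvanishing coordinate of $x$: the cases $\xi_1\neq0$; $\xi_1=0,\ \xi_2\neq0$; the first nonzero $\xi_r$ with $r\geq3$; the first nonzero $\zeta_r$ with $1\leq r\leq k$; and the first nonzero $\zeta_{k+r}$ with $1\leq r\leq k$. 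In each case one sets the irrelevant parameters to zero and solves for the rest.

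The main obstacle, and the only place where the argument really diverges from the $\mu_1$ computation, is the solvability of this system in the presence of the coupled scalar $a_1+b_1$. In $\mu_1$ the diagonal entries were pure powers of $a_1$, so one could read $a_1$ off the top equation and back-substitute; here the $(i,i)$ diagonal entry of $\varphi_{1,1}$ equals $(a_1+b_1)^{i-1}a_1$, and the relation $[e_1,f_1]=e_2+f_{k+1}$ makes $b_1$ enter the recursion on the same footing as $a_1$. The key point to verify is therefore that one can always choose $a_1$ and $a_1+b_1$ nonzero and consistent with the prescribed leading coordinate of $\Delta(x)$, and that the remaining shift parameters $a_2,\dots,a_{n-2k}$ and $b_2,\dots,b_{2k}$, together with $c_i$ and $d_{j,i}$, then absorb the lower-order coordinates. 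Once this is checked, the triangular structure of the system guarantees a solution in each case, and the two implications together yield the theorem.
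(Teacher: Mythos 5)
Your proposal takes essentially the same approach as the paper: the paper's own proof of this theorem is a one-line reference declaring it "similar to the proof of Theorem \ref{sec1.4.2}", and your plan is exactly that adaptation — rerunning the $\mu_1$ argument (forward direction by comparing $\Delta$ and $\varphi_x$ on basis vectors, converse by the case analysis on the first nonzero coordinate) with Proposition \ref{prop2} supplying the automorphism data in place of Proposition \ref{prop1}. Your additional attention to the coupled parameter $a_1+b_1$ (requiring both $a_1\neq 0$ and $a_1+b_1\neq 0$) is a correct and worthwhile refinement of that same route.
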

\begin{proof} The proof is similar to the proof of Theorem \ref{sec1.4.2}
\end{proof}

\begin{thm}\label{sec1.4.4}
Let $\Delta$ be a linear operator on $\mu_{3}$. Then $\Delta$
is a local automorphism if and only if its matrix has the form:
\begin{eqnarray*}
\Delta= \left(\begin{array}{cc}
\Delta_{1,1} & \Delta_{1,2}\\
\Delta_{2,1} & \Delta_{2,2}
\end{array}\right),
\end{eqnarray*}
where
$$
 \Delta_{1,1}=  \sum\limits_{j=1}^{n-2k}\sum\limits_{i=j}^{n-2k}\alpha_{j,i}e_{j,i}, \quad \alpha_{i,1}=\alpha_{i,2}, \ \alpha_{2,2}=\alpha_{1,1}+\alpha_{2,1}, \ 3\leq i\leq n-2k-1$$
$$
 \Delta_{2,1}= \sum\limits_{i=1}^{2k}\beta_{n-2k+i,1}e_{n-2k+i,1}+\sum\limits_{i=1}^{k}b_{n-2k+i,2}e_{n-2k+i,2}+\sum\limits_{i=1}^{k}b_{n-2k+i,3}e_{n-2k+i,3},$$
$$
\Delta_{1,2}=  \sum_{i=1}^{k}\gamma_{n-2k,n-2k+i}e_{n-2k,n-2k+i},$$
$$
\Delta_{2,2}=  \begin{pmatrix}
\Delta_{2,2}^{(1)} & 0  \\
\Delta_{2,2}^{(2)} & \Delta_{2,2}^{(3)}
\end{pmatrix},$$
$$
\Delta_{2,2}^{(1)} =  \sum\limits_{i=n-2k+1}^{n-k}\sum\limits_{j=n-2k+1}^{n-k}\delta_{j,i}e_{j,i},\quad
\Delta_{2,2}^{(2)}= \sum\limits_{i=n-k+1}^{n-k}\sum\limits_{j=n-k+1}^{n}\delta_{j,i}e_{j,i},$$
$$
 \Delta_{2,2}^{(3)}=  \sum\limits_{i=n-k+1}^{n}\sum\limits_{j=n-k+1}^{n}\delta_{j,i}e_{j,i}
$$
\end{thm}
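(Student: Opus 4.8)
The plan is to follow the scheme of the proof of Theorem~\ref{sec1.4.2}, establishing the two implications separately and feeding in the explicit description of the automorphism group $\mathrm{Aut}(\mu_3)$ from Proposition~\ref{prop3}. Throughout I write a candidate local automorphism in the block form $\Delta=\begin{pmatrix}\Delta_{1,1}&\Delta_{1,2}\\\Delta_{2,1}&\Delta_{2,2}\end{pmatrix}$ with unknown entries $\alpha_{i,j},\beta_{i,j},\gamma_{i,j},\delta_{i,j}$, recording the columns of $\Delta$ as the images $\Delta(e_i)$ and $\Delta(f_j)$.

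For the implication $(\Rightarrow)$ I would first observe that $\mu_3$ is generated by $e_1,e_2,f_1,\dots,f_k$: here $e_3=[e_1,e_1]$ and $e_{i+1}=[e_i,e_1]$ for $2\le i\le n-2k-1$, so, in contrast with $\mu_1$, the $e$-chain is regenerated starting from $e_3$ rather than $e_2$, and $e_2$ is itself a generator. For each basis vector $e_i$ $(3\le i\le n-2k)$ and $f_i$ $(1\le i\le 2k)$ I would choose an automorphism $\varphi_x$ with $\Delta(x)=\varphi_x(x)$, compute $\varphi_x(x)$ as the corresponding column of $\Phi$ in Proposition~\ref{prop3}, and compare coefficients. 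As in the $\mu_1$ case this forces every entry outside the stated support to vanish, so that $\Delta_{1,2}$ and $\Delta_{2,1}$ collapse to the prescribed single columns and $\Delta_{1,1},\Delta_{2,2}$ become lower triangular of the prescribed shape. The genuinely new feature is the pair of linear relations $\alpha_{i,1}=\alpha_{i,2}$ $(3\le i\le n-2k-1)$ and $\alpha_{2,2}=\alpha_{1,1}+\alpha_{2,1}$. These encode the fact that in any single automorphism the second column of $\varphi_{1,1}$ is tied to the first: its $(2,2)$ entry is $a_1+a_2$ while the lower entries of columns $1$ and $2$ coincide and equal $a_i$. To extract the relations I would compare the action of $\Delta$ on combinations of $e_1$ and $e_2$ with the action of a single automorphism, so that the shared parameters $a_i$ and the coupling $a_1+a_2$ are forced to be reproduced simultaneously.

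For the implication $(\Leftarrow)$ I would fix an arbitrary $x=\sum_{i=1}^{n-2k}\xi_ie_i+\sum_{i=1}^{2k}\zeta_if_i$, write out the coordinates of $\varphi(x)$ from Proposition~\ref{prop3} and of $\Delta(x)$ from the prescribed form, and equate them to obtain a system in the automorphism parameters $a_i,\beta,b_{i,j},c_i,d_{i,j}$ together with the $\varphi_{2,2}$ data, exactly analogous to \eqref{for1.4.2}. Solvability would then be verified by splitting into cases according to the first nonvanishing coordinate of $x$ (first some $\xi_r$, then some $\zeta_r$), setting the irrelevant parameters to zero and solving the remaining triangular system for $a_1$ and the subdiagonal data. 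The relations on $\Delta_{1,1}$ are precisely what makes the $\xi_1$ and $\xi_2$ cases consistent, since they guarantee that the values demanded of the coupled entries $(1,1),(2,1),(2,2)$ and of the coinciding columns $1$ and $2$ can be met by a single choice of $a_1,a_2,\beta$.

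The hard part is exactly this pair of linear relations between the first two columns of $\Delta_{1,1}$. Unlike the $\mu_1$ situation, where evaluating $\Delta$ on individual basis vectors already pins down the whole matrix with no relations among free entries, here the coupling produced by the factor $a_1+a_2$ in $\mathrm{Aut}(\mu_3)$ links columns that are a priori independent for a merely local operator; showing that this coupling is both necessary (forward direction) and sufficient for solvability (backward direction) is the delicate step, and it is the point at which the argument genuinely departs from that of Theorem~\ref{sec1.4.2}.
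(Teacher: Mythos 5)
Your plan is correct and is, in outline, exactly what the paper intends: the paper's entire proof of this theorem is the single sentence that it is ``similar to the proof of Theorem~\ref{sec1.4.2}'', with the automorphism data taken from Proposition~\ref{prop3}. However, your proposal contains one genuinely additional idea that a literal copy of the $\mu_1$ argument lacks, and which is indispensable here. In the forward direction of Theorem~\ref{sec1.4.2} one only evaluates $\Delta$ on individual basis vectors; for $\mu_3$ that can never produce the cross-column relations $\alpha_{i,1}=\alpha_{i,2}$ and $\alpha_{2,2}=\alpha_{1,1}+\alpha_{2,1}$, because the columns $\Delta(e_1)$ and $\Delta(e_2)$ are matched against two \emph{unrelated} automorphisms $\varphi_{e_1}$ and $\varphi_{e_2}$, whose parameters are independent. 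The relations are forced precisely by evaluating on combinations $\xi_1e_1+\xi_2e_2$ with $\xi_1+\xi_2=0$: for any automorphism $\varphi$ of $\mu_3$ one has $\varphi(e_1-e_2)_{e_i}=a_i-a_i=0$ for $3\le i\le n-2k-1$ and $\varphi(e_1-e_2)_{e_2}=a_2-(a_1+a_2)=-a_1=-\varphi(e_1-e_2)_{e_1}$, which yields the two families of relations; whereas for $\xi_1+\xi_2\neq 0$ the matching system is solvable with no constraint at all, so no other combination detects them. Your plan (``compare the action of $\Delta$ on combinations of $e_1$ and $e_2$'') is the right move and correctly flags this as the point of departure from Theorem~\ref{sec1.4.2}, though it stops just short of singling out the decisive element $e_1-e_2$; making that one computation explicit is all that remains to complete the forward direction, and your case analysis for the converse (where the same relations guarantee consistency of the $\xi_1,\xi_2$ equations) is sound.
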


\begin{proof} The proof is similar to the proof of Theorem \ref{sec1.4.2}
\end{proof}

\begin{exm}\label{Locaut1}
 Leibniz algebras $\mu_1$ (see Proposition \ref{prop1}), admit a local automorphisms which is not a automorphisms.
\end{exm}

\begin{proof} Let us consider the linear operator $\Phi$ on $\mathfrak{L}$, such that
$$\Phi\left(x\right)=x+x_2e_{n-2k},\ \ x=\sum\limits_{i=1}^{n-2k}x_ie_i+\sum\limits_{i=1}^{2k}x_if_{i}$$

By Lemma \ref{prop1}, it is not difficult to see that $\Phi$ is not a automorphism.
We show that, $\Phi$ is a local automorphism on $\mu_1.$

Consider the automorphism $\varphi_1$ and $\varphi_2$ on the algebras $\mu_1,$ defined as:
\begin{equation*}\begin{split}
\varphi_1\left(x\right)&=x+x_1e_{n-2k-1}+x_2e_{n-2k},\\
\varphi_2\left(x\right)&=x+\beta x_1e_{n-2k},\ \ \ x=\sum\limits_{i=1}^{n-2k}x_ie_i+\sum\limits_{i=1}^{2k}x_if_{i}.
\end{split}
\end{equation*}

Now, for any $x=\sum\limits_{i=1}^{n-2k}x_ie_i+\sum\limits_{i=1}^{2k}x_if_{i},$ we shall find a automorphism $\varphi,$ such that $\Phi(x) = \varphi(x).$

If  $x_1=0,$ then
$$\varphi_1(x)=x+x_2e_{n-2k}=\Phi(x).$$

If $x_1\neq 0,$ then set $\beta=\frac{x_2}{x_1},$ we obtain that
\begin{equation*}\begin{split}
 \varphi_2(x)=x+\beta x_1e_n=x+\frac{x_2}{x_1}x_1e_{n-2k}=x+x_2e_{n-2k}=\Phi(x)
\end{split}\end{equation*}

Hence, $\Phi$ is a local automorphism.
\end{proof}

\begin{rem}\label{remsec 1.4.2}
The dimensions of the space of local automorphism of algebras $\mu_{1}, \mu_{2}$ and $\mu_{3}$ are
\begin{eqnarray*}
& \dim LocDer(\mu_{1})= & \frac{n^2+10k^2-4kn+n+6k}{2}, \\
& \dim LocDer(\mu_{2})= & \frac{n^2+10k^2-4kn+n+2k+4}{2} ,  \\
& \dim LocDer(\mu_{3})= & \frac{n^2+10k^2-4kn-n+12k+4}{2},
\end{eqnarray*}
where $k\in \mathbb{N}$ and $n\geq 2k+4.$
\end{rem}

Remarks \ref{remsec 1.4.1} and \ref{remsec 1.4.2} show that the dimensions of the spaces of all local automorphism of the algebras $\mu_i,$ $i = 1,2,3,$ are strictly greater than the dimensions of the space of all automorphism of $\mu_i.$ Therefore, we have the following result.

\begin{cor}
The algebras $\mu_{1}, \mu_{2}$ and $\mu_{3}$ admit local
automorphism which are not automorphism.
\end{cor}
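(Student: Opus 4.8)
The plan is to obtain the corollary from a single dimension count, combining Remarks \ref{remsec 1.4.1} and \ref{remsec 1.4.2}. Two preliminary observations do all the conceptual work. First, every automorphism of $\mu_i$ is trivially a local automorphism: given $\varphi\in Aut(\mu_i)$ and any $x$, one may take $\varphi_x=\varphi$, so $\varphi(x)=\varphi_x(x)$; hence $Aut(\mu_i)\subseteq LAut(\mu_i)$. Second, and this is already encoded in the statements of Theorems \ref{sec1.4.2}, \ref{sec1.4.3} and \ref{sec1.4.4}, for each $i$ the set $LAut(\mu_i)$ of all local automorphisms is exactly the set of linear operators whose matrix has the prescribed block form. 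Since those matrices are cut out by linear conditions only (certain entries forced to vanish, and in the case of $\mu_3$ the linear relations $\alpha_{i,1}=\alpha_{i,2}$ and $\alpha_{2,2}=\alpha_{1,1}+\alpha_{2,1}$), the set $LAut(\mu_i)$ is a genuine linear subspace of $\mathrm{End}(\mu_i)$, of the dimension recorded in Remark \ref{remsec 1.4.2}.

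With these two facts in hand I would argue by contradiction. Suppose $LAut(\mu_i)=Aut(\mu_i)$ for some $i$. Then the linear space $LAut(\mu_i)$ would coincide as a set with $Aut(\mu_i)$, forcing the latter to have dimension equal to $\dim LAut(\mu_i)$. But Remark \ref{remsec 1.4.1} records $\dim Aut(\mu_i)$, and a direct comparison yields the strict inequality
\begin{equation*}
\dim LAut(\mu_i) > \dim Aut(\mu_i), \qquad i=1,2,3,
\end{equation*}
valid for every $k\in\mathbb{N}$ and $n\geq 2k+4$. This inequality is the computational heart of the proof; it reduces to checking that the polynomial $2\dim LAut(\mu_i)-2\dim Aut(\mu_i)$, a quadratic in $n$ with leading coefficient $1$, is positive on the relevant range. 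I would verify this by evaluating at the boundary value $n=2k+4$ (where one gets a manifestly positive expression such as $2k^2+2k+12$ for $\mu_1$) and observing that the dominant $n^2$ term keeps it positive for all larger $n$. The resulting contradiction shows $Aut(\mu_i)\subsetneq LAut(\mu_i)$, and the properness of this inclusion is precisely the assertion that there exists a local automorphism which is not an automorphism.

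The main obstacle I anticipate is conceptual rather than computational: one must be careful that $Aut(\mu_i)$ is a group, that is, an algebraic variety of invertible operators, and not a linear space, so the phrase ``dimension of the space of automorphisms'' refers to its dimension as a variety. The comparison with the linear space $LAut(\mu_i)$ is then legitimate because a variety of dimension $d$ cannot fill out a vector space of dimension $D$ when $d<D$; once this point is made precise the rest is elementary. As a sanity check for $i=1$, the existence can be witnessed concretely rather than merely inferred: Example \ref{Locaut1} already exhibits the operator $\Phi(x)=x+x_2e_{n-2k}$, which is a local automorphism but fails to be an automorphism. Thus for $\mu_1$ the dimension count and the explicit construction agree, while for $\mu_2$ and $\mu_3$ the dimension count is the efficient route to the conclusion.
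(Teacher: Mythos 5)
Your proposal takes essentially the same route as the paper: the corollary is deduced by comparing the dimensions in Remarks \ref{remsec 1.4.1} and \ref{remsec 1.4.2}, the strict inequality $\dim LAut(\mu_i)>\dim Aut(\mu_i)$ (valid for all $k\in\mathbb{N}$, $n\geq 2k+4$) forcing the inclusion $Aut(\mu_i)\subseteq LAut(\mu_i)$ to be proper. Your additional points---the verification of the inequality at the boundary $n=2k+4$, the remark that $Aut(\mu_i)$ is a variety rather than a linear subspace, and the concrete witness from Example \ref{Locaut1} for $\mu_1$---merely flesh out what the paper asserts in one sentence.
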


\end{document}